\let\origsection=\section \def\section{\@ifstar{\origsection*}{\mysection}} 
\def\mysection{\@startsection{section}{1}\z@{.7\linespacing\@plus\linespacing}{.5\linespacing}{\normalfont\scshape\centering\S}}
\theoremstyle{plain}
\newtheorem{thm}{Theorem}[section]
\newtheorem{prop}[thm]{Proposition}
\theoremstyle{definition}
\newtheorem{quest}{Question}
\let\eps=\varepsilon
\let\theta=\vartheta
\let\rho=\varrho
\let\phi=\varphi
\def\EE{\mathbb E}
\def\NN{\mathbb N}
\def\PP{\mathbb P}
\def\cF{{\mathcal F}}
\def\cC{{\mathcal C}}
\DeclareMathOperator{\forb}{Forb}
\let\polishlcross=\l
\def\l{\ifmmode\ell\else\polishlcross\fi}
\def\dcup{\dot\cup}
\def\colond{\colon\,}
\def\tand{\ \text{and}\ }
\def\qand{\quad\text{and}\quad}
\def\homa{\xrightarrow{\text{hom}\,}}
\let\phi=\varphi
\let\setminus=\smallsetminus
\let\emptyset=\varnothing
\begin{document}
\title[Structure of dense graphs with fixed clique number]{On 
the structure of dense graphs with fixed clique number}

\author[Heiner Oberkampf]{Heiner Oberkampf}
\address{Institut f\"ur Informatik, Universit\"at Augsburg,  Augsburg, Germany}
	\email{heiner.oberkampf@informatik.uni-augsburg.de}

\author[Mathias Schacht]{Mathias Schacht}
\address{Fachbereich Mathematik, Universit\"at Hamburg, Hamburg, Germany}
\email{schacht@math.uni-hamburg.de}
\thanks{The second author was supported through the Heisenberg-Programme of the DFG}

\keywords{extremal graph theory, chromatic thresholds}
\subjclass[2010]{05C35 (primary), 005C75 (secondary)}

\begin{abstract}
We study structural properties of graphs with fixed clique number 
and high minimum degree. In particular, we  show that there exists a 
function $L=L(r,\eps)$, such that every $K_r$-free graph $G$ 
on~$n$ vertices with minimum degree at least $(\frac{2r-5}{2r-3}+\eps)n$ 
is homomorphic to a $K_r$-free graph on at most $L$ vertices. It is known that 
the required minimum degree condition is approximately best possible for this result.

For $r=3$ this result was obtained by \L uczak 
[\emph{On the structure of triangle-free graphs of large minimum degree}, Combinatorica~\textbf{26} (2006), no.~4, 489--493] 
and, more recently,
Goddard and Lyle [\emph{Dense graphs with small clique number}, J.~Graph Theory~\textbf{66} (2011), no.~4, 319-331] 
deduced the general case from \L uczak's result. \L uczak's proof was based on an application of 
Szemer\'edi's regularity lemma and, as a consequence, it only gave rise to a tower-type bound on $L(3,\eps)$. The proof 
presented here replaces the application of the regularity lemma by a probabilistic argument, which yields a bound for~$L(r,\eps)$
that is doubly  exponential  in $\textrm{poly}(\eps)$.
\end{abstract} 

\maketitle

  														  \section{Introduction}						  														  

\subsection{Notation}
We follow the notation from~\cite{RD} and briefly review some of it below. 
The graphs we consider here are undirected, simple, and have no loops and for 
a graph $G=(V,E)$ we denote by \mbox{$V=V(G)$} its \emph{vertex set} and by $E=E(G)\subseteq\binom{V}{2}$ 
its \emph{edge set}. The number of vertices is finite and often denoted by $n=|V|$. By $C_r$ and $K_r$
we denote the \emph{cycle} and the \emph{complete graph/clique} on $\l$ vertices. 
For two adjacent vertices $x, y$ we simply denote its edge by  $xy$.
If $x\in X\subseteq V(G)$ and $y\in Y\subseteq V(G)$ then $xy$ is an \emph{$X-Y$-edge}.
For disjoint sets $X$ and $Y$ the set of all $X-Y$-edges is a subset of $E$ and 
is denoted by $E_G(X,Y)$. Moreover, the number of $X-Y$ edges in $G$ is denoted by $e_G(X,Y)=|E_G(X,Y)|$.
and for nonempty~$X$ and $Y$ the  \emph{density} is defined by $d_G(X,Y)=\frac{e_G(X,Y)}{|X||Y|}$.
The set of \emph{neighbours} of a vertex $v$ is denoted by $N_G(v)$ and its size
$d_G(v)=|N_G(v)|$ is the \emph{degree} of a vertex $v$, where we sometimes suppress the subscript $G$ if there is no danger of confusion. We denote the  \emph{minimum degree} of $G$ by $\delta(G)$.
For a subset $U\subseteq V$ we define the \emph{common (or joint) neighbourhood} of $U$ as
\[
	N(U)= \bigcap_{u \in U} N(u)
\]
and we emphasise  that this differs from the notation in~\cite{RD}.
For later reference we note that the size of $N(U)$ can be easily bounded from below 
in terms of the minimum degree of $G=(V,E)$ by
\begin{equation}\label{eq:k-vertices}
	|N(U)|\geq |U|\cdot\delta(G)-(|U|-1)\cdot |V|\,.
\end{equation}
Moreover, for a subset $U\subseteq V$ we denote by $G[U]$ 
the \emph{induced subgraph on $U$} and we write $G - U$ for $G[V\setminus U]$.

A (vertex) \emph{colouring} of a graph $G = (V,E)$ is a map $c\colond V \rightarrow \cC$ such that $c(v) \neq c(w)$, whenever $v$ and $w$ are adjacent.
The elements of the set~$\cC$ are called the available \emph{colours}.
A graph $G=(V,E)$ is \emph{$k$-colourable} if there exists colouring $c\colond V \rightarrow [k]= \{1,\dots,k\}$.
The \emph{chromatic number} $\chi(G)$
is the smallest integer $k$ such that $G$ has is $k$-colourable.

A \emph{homomorphism} from a graph $G$ into a graph  $H$ is a mapping
$\phi\colond V(G) \rightarrow V(H)$ which preserves adjacencies, i.e.\
for all pairs $uv \in E(G)$ we have
$\phi(u)\phi(v)\in E(H)$.
We write 
\[
	G\homa H
\] 
to indicate that some homomorphism~$\phi$ exists.
We say $G$ is a \emph{blow-up} of $H$, if $G$ is obtained from $H$
by replacing every vertex $x$ of $H$ by an independent sets~$I_x$ 
and edges of $H$ correspond to complete bipartite graphs, i.e., 
$I_x$, $I_y$ spans a complete bipartite graph in $G$ if $xy\in E(H)$
and otherwise $e_G(I_x,I_y)=0$. If $G$ is a blow-up of $H$
then $G\homa H$ and $K_r\subseteq G$ if and only if $K_r\subseteq H$.

By $F\subseteq G$ we mean that $G$ contains a copy of $F$,
that is there exists an injective homomorphism from $F$ into $G$. If $G$ contains no such copy 
of $F$ (i.e., $F\nsubseteq G$), then we say $G$ is $F$-free.
For a graph~$F$ 
\[
	\forb(F)=\{G\colond F \not\subseteq G\}
\]
denotes the class 
of $F$-free graphs, i.e.\
the collection of those graphs $G$ which do not contain a copy of $F$. Moreover, we set
\[
	\forb_n(F)=\{G\colond F \not\subseteq G \tand |V(G)|=n\}\,.
\]
An $F$-free graph~$G=(V,E)$ 
is \emph{maximal $F$-free} if the addition of any edge to $G$ leads to a copy of $F$ in $G$, i.e.,
for every $xy\in \binom{V}{2}\setminus E$ we have $F\subseteq (V,E\cup\{xy\})$.
Finally, we define the join $G\vee H$ of two graphs~$G$ and $H$ as the graph with 
vertex set $V(G\vee H) = V(G)\cup V(H)$
and edge set
\[
	E(G\vee F) = E(G)\cup E(F) \cup \{vw\colond v\in V(G) \tand w\in V(F)\}\,. 
\]

\subsection{Chromatic thresholds of graphs}
We are interested in structural properties of graphs $G\in \forb(F)$, 
which are forced by an additional minimum degree assumption on~$G$.
For example, if $F$ is a clique and $\delta(G)$ is sufficiently high, 
then Tur\'an's theorem~\cite{Tu41} assert that~$G$ is $(r-1)$-partite and, in particular,
the chromatic number of $G$ is bounded by a constant independent of $|V(G)|$.
More generally, Andr\'asfai~\cite{A62}
raised the following question:
\emph{For a given graph $F$ and an integer $k$, what is smallest condition imposed on the minimum degree~$\delta(G)$ 
such any graph $G\in\forb(F)$ satisfying this minimum degree condition has chromatic number at most~$k$?}
Here we are interested in the case when the minimum degree condition yields an upper bound on $\chi(G)$
independent from the graph~$G$ itself. 
This leads to the so called \emph{chromatic threshold} for a given \mbox{graph $F$}
\[
	\delta_{\chi}(F) 
	= 
	\inf\big\{\alpha\in[0,1]\colond \exists k\in\NN \text{ s.t.\ }
		\chi(G)\leq k \ \ \forall G\in \forb(F)\text{ with }\delta(G)\geq \alpha |V(G)|\big\}\,.
\]
If $F'\subseteq F$, then $\forb(F')\subseteq\forb(F)$, so obviously $\delta_{\chi}(F') \leq \delta_{\chi}(F)$.
Moreover, it follows from the Erd\H os--Stone theorem~\cite{ErSt46} that 
$\delta_{\chi}(F)
	\leq 
	\frac{\chi(F)-2}{\chi(F)-1}$
for every graph $F$ with at least one edge.

For $F=K_3$ it was shown in~\cite{ES73} 
that $\delta_{\chi}(K_3)\geq 1/3$. In the other direction, Thomassen~\cite{T_C02} obtained a matching upper bound and,
therefore, $\delta_{\chi}(K_3)=1/3$.
In fact, Erd\H{o}s and Simonovits~\cite{ES73} asked whether all triangle-free graphs $G$
with $\delta(G)\geq(1/3+o(1))|V(G)|$ are 3-colorable. 
This was answered negatively by H\"aggkvist~\cite{H82}, but recently Brandt and Thomass\'e~\cite{BT}  showed that the 
chromatic number of such graphs is bounded by~$4$. 

Nikiforov~\cite{N10} and Goddard and Lyle~\cite{GL10} extended those results from triangles to $r$-cliques
and showed for every $r\geq3$ that
\begin{equation}\label{eq:dchir}
	\delta_{\chi}(K_r) = \frac{2r-5}{2r-3}
\end{equation}
and, in fact,  $\chi(G)\leq r+1$ for every 
$K_r$-free graph $G$ with $\delta(G)>\frac{2r-5}{2r-3}|V(G)|$.

In the case when $F$ is an odd cycle of length at least five it was shown by Thomassen~\cite{T_C07} that 
the chromatic threshold is zero and \L uczak and Thomass\'e~\cite{LT10} proved that 
$\delta_{\chi}(F)\not\in(0,1/3)$ for all graphs~$F$ and that $\delta_{\chi}(F)=0$ if $F$ is nearly bipartite 
(a graph is nearly bipartite if it is triangle-free and it admits a vertex partition into two parts
such that one part is independent and the other part induces a graph with maximum degree one).
Recently, Allen et al.~\cite{ABK} extended of the work of \L uczak and Thomass\'e and determined 
the chromatic threshold for every graph~$F$.

\subsection{Homomorphism thresholds of graphs}
Viewing $\chi(G)\leq k$ as the property that $G\homa K_k$, one may ask
for a graph $G\in\forb(F)$, whether $K_k$ can be replaced by a graph $H$ of bounded size (independent of $G$), that is 
$F$-free itself.
More precisely, in \cite{T_C02} Thomassen posed the following question: \emph{Given a fixed constant $c$, does there exist a finite family of triangle-free graphs such that every triangle-free graph on $n$ vertices and minimum degree greater than $cn$ is homomorphic to some graph of this family?}
To formalise this question we define the \emph{homomorphism threshold} of a graph $F$
\begin{multline*}									
\delta_{\hom}(F) = 
	\inf\big\{\alpha\in[0,1]\colond\exists\, k\in\NN \text{ s.t.\ } 
		\forall\, G\in \forb(F)\text{ with } \delta(G)\geq\alpha|V(G)|\\
		\exists\, H\in \forb_k(F)\text{ with } G\homa H\big\}\,.
\end{multline*}				
Thomassen then asked to determine $\delta_{\hom}(K_3)$.
Since $G\homa H$, implies  $\chi(G)\leq |V(H)|$,  we clearly have
									\[\delta_{\hom}(F)\geq\delta_{\chi}(F).\]
									In \cite{L06} \L uczak proved $\delta_{\hom}(K_3)=1/3$ and, hence, 
for the  triangle $K_3$ the homomorphic and the chromatic threshold are equal.
Recently, Goddard and Lyle \cite{GL10} extended \L uczak's result
showing, that $K_r$-free graphs with minimum degree bigger than $\frac{2r-5}{2r-3}$ are homomorphic to the join
$K_{r-3} \vee H$, where $H$ is a triangle-free graph with $\delta(H)>|V(H)|/3$.
Consequently, we have for every $r\geq 3$
									\begin{equation}\label{eq:homchi}
	\delta_{\hom}(K_r)=\delta_{\chi}(K_r)=\frac{2r-5}{2r-3}\,.
\end{equation}
									\L uczak's proof in~\cite{L06} was based on Szemer\'edi's regularity lemma~\cite{Sz}.
We give a different proof of~\eqref{eq:homchi}, which avoids the regularity lemma 
and uses a simple probabilistic argument.

\begin{thm}\label{theorem:main}
	For every integer $r\geq 3$ we have
	\[\delta_{\hom}(K_r)=\frac{2r-5}{2r-3}.\]
\end{thm}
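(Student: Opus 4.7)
The lower bound $\delta_{\hom}(K_r) \geq \frac{2r-5}{2r-3}$ is immediate from $\delta_{\hom}(F) \geq \delta_{\chi}(F)$ (any homomorphism $G \homa H$ gives $\chi(G) \leq |V(H)|$) together with~\eqref{eq:dchir}. For the upper bound I need to produce, for every $\eps > 0$, a constant $L = L(r,\eps)$ and, for each $K_r$-free $G$ with $\delta(G) \geq (\frac{2r-5}{2r-3}+\eps)|V(G)|$, a homomorphism $G \homa H$ with $H \in \forb_L(K_r)$. I would split this into a reduction from general $r$ to $r = 3$ and a direct probabilistic attack on the triangle-free case.

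For the reduction I plan to follow Goddard and Lyle~\cite{GL10}. Given $K_r$-free $G$ with $\delta(G) \geq (\tfrac{2r-5}{2r-3}+\eps)n$, I would select an $(r-3)$-clique $Q \subseteq V(G)$ whose common neighbourhood $N(Q)$ is near extremal size; by~\eqref{eq:k-vertices} one gets $|N(Q)| \geq (\tfrac{2}{2r-3}+O(\eps))n$ and the induced subgraph $G[N(Q)]$ is triangle-free with relative minimum degree exceeding $(\tfrac{1}{3}+\eps')|N(Q)|$. A homomorphism $\phi' \colon G[N(Q)] \homa H'$ into a small triangle-free graph then lifts to $\phi\colon G \homa K_{r-3} \vee H'$ by routing every vertex outside $N(Q)$ to a vertex of $K_{r-3}$ corresponding to some neighbour in $Q$; the target $K_{r-3} \vee H'$ is $K_r$-free and has $r-3 + |V(H')|$ vertices.

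This leaves the core case: a triangle-free $G$ with $\delta(G) \geq (\tfrac{1}{3}+\eps)n$ should admit a homomorphism into a triangle-free graph of bounded size. I plan to sample a random set $R \subseteq V(G)$ of size $s = s(\eps)$ and assign each vertex its \emph{type} $\tau(v) = N(v) \cap R$. The map $v \mapsto \tau(v)$ is automatically a homomorphism from $G$ into the graph $H$ whose vertices are the realised types (with $\tau_1 \sim \tau_2$ iff some edge of $G$ witnesses it), and $|V(H)| \leq 2^s$. To force $H$ triangle-free I would exploit two facts: for every edge $uv \in E(G)$ one has $N(u) \cap N(v) = \emptyset$ and hence $\tau(u) \cap \tau(v) = \emptyset$; and the minimum degree condition suggests that $|\tau(v)| > s/3$ holds typically. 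If both held for every $v$, a triangle in $H$ would give three pairwise disjoint subsets of $R$ each of size $>s/3$, impossible. The hardest step, I expect, is enforcing $|\tau(v)| > s/3$ \emph{uniformly} over $V(G)$ with $s$ independent of $n$: a direct Chernoff plus union bound calls for $s = \Omega(\eps^{-2}\log n)$, which is forbidden. I anticipate circumventing this either by iterating the sampling on a bounded number of scales, or by enlarging the type alphabet to include common neighbourhoods $N(U)$ of small subsets $U$ so as to absorb the $o(n)$ atypical vertices; the resulting $s$ exponential in $\mathrm{poly}(1/\eps)$ and the $2^s$ bound on types then account for the doubly exponential value of $L(r,\eps)$ advertised in the abstract.
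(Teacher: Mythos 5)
Your overall plan shares the paper's key idea---sample a bounded-size set and type vertices by their neighbourhoods in it---but as written it has two genuine gaps, one in the reduction and one (which you yourself flag) at the heart of the probabilistic step.

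First, the reduction to $r=3$ is not only unnecessary (the paper's argument runs directly for all $r$), but the lifting step you describe does not produce a homomorphism. If you send every vertex $v\notin N(Q)$ to the image of some neighbour $q\in Q$, then two adjacent vertices $u,v\notin N(Q)$ may both be routed to the same vertex of $K_{r-3}$, and a homomorphism into a loopless graph requires every fibre to be independent; there is no reason the set of vertices assigned to a fixed $q$ is independent in $G$. Goddard and Lyle's actual deduction is considerably more delicate than ``route to a neighbour in $Q$'', so this step cannot be waved through. Second, and more seriously, the step you identify as ``the hardest''---forcing $|\tau(v)|>s/3$ uniformly with $s$ independent of $n$---is exactly where the real work lies, and ``iterating the sampling on a bounded number of scales'' or ``enlarging the type alphabet'' is not an argument. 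The paper resolves this by (i) passing to a \emph{maximal} $K_r$-free supergraph, so that any two non-adjacent vertices have a $K_{r-2}$ in their common neighbourhood avoiding any prescribed small set (Proposition~\ref{prop:K_r-2}); (ii) cleaning the sample to $Y=X\setminus U_X$ so that $G[Y]\in\cF(r,\eps/4)$ and the typical classes $V_i$ are genuinely independent with complete-or-empty bipartite graphs between them; and (iii) handling the at most $\eps n/4$ atypical vertices by showing, via maximality again, that each such vertex sees every class $V_i$ either completely or not at all, so they can be typed by subsets of $[t]$, contributing $2^T$ further independent classes. Your sketch never invokes maximality, without which none of these structural statements about non-adjacent pairs are available; supplying that mechanism (or an equivalent one) is the missing idea.
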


It seems an interesting open question to determine the homomorphism threshold
for other graphs than cliques. In particular, the case of 
odd cycles of length at least five  seems to be a first interesting open case and 
we put forward the following question.

\begin{quest}\label{q:1}
	What is $\delta_{\hom}(C_{2\l+1})$ for $\l\geq 2$?
\end{quest}

A somewhat related question concerns the homomorphism threshold 
for forbidden families of graphs. Note that the definitions of 
$\forb(F)$ and $\delta_{\hom}(F)$ straight forwardly extend from
one forbidden graph $F$ to forbidden families $\cF$ of graphs.
In view of Question~\ref{q:1} it is natural to consider the family 
$\cC_{2\l+1}=\{C_3,\dots,C_{2\l+1}\}$ of odd cycles 
of length at most $2\l+1$ and we close this introduction with the following open
question.

\begin{quest}\label{q:2}
	What is $\delta_{\hom}(\cC_{2\l+1})$ for $\l\geq 2$?
\end{quest}

																															\section{Simple observations}																																										
For an integer $r\geq 3$ and $\eps>0$ 
the following subclass of $\forb(K_r)$ will play a prominent r\^ole
\[
	\cF(r,\eps)
	=
	\big\{G=(V,E)\in\forb(K_r) \colond \delta(G)\geq \big(\tfrac{2r-5}{2r-3}+\eps \big)|V| \big\}\,,
\]
since Theorem~\ref{theorem:main} asserts that there exists some function $L=L(r,\eps)$ and 
$H\in\forb(L,K_r)$ such that for every $G\in \cF(r,\eps)$ we have $G\homa H$. Note that 
$\cF(r,\eps)$ contains only graphs on at least $r-2$ vertices.
We begin with a few observations concerning common neighbourhoods 
in maximal $K_r$-free graphs of given minimum degree $\delta(G)$.

\begin{prop}\label{prop:2nonadj}
	For $r\geq3$ let $G=(V,E)$ be a maximal $K_r$-free graph.
	If two distinct vertices $u$, $v\in V$ are non-adjacent, then $|N(u)\cap N(v)|\geq r\delta(G)-(r-2)|V|$.
\end{prop}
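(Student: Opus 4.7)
The strategy is to use the maximality of $G$ to locate a large clique inside $N(u)\cap N(v)$, and then to exploit $K_r$-freeness of~$G$ to force the common neighbourhood of this clique to miss $N(u)\cup N(v)$. Quantifying both facts via the trivial bound~\eqref{eq:k-vertices} will yield the inequality. Concretely, since $uv\notin E$ and $G$ is maximal $K_r$-free, adding the edge~$uv$ produces a copy of $K_r$ that must use this new edge. Consequently, there is an $(r-2)$-clique $W=\{w_1,\dots,w_{r-2}\}$ with $W\subseteq N(u)\cap N(v)$; in particular both $W\cup\{u\}$ and $W\cup\{v\}$ are $(r-1)$-cliques in $G$.

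Any $x\in N(W)\cap N(u)$ would be adjacent to every vertex of the $(r-1)$-clique $W\cup\{u\}$, producing a copy of $K_r$ in $G$ and contradicting $G\in\forb(K_r)$. Hence $N(W)\cap N(u)=\emptyset$, and symmetrically $N(W)\cap N(v)=\emptyset$, so $N(W)\subseteq V\setminus\bigl(N(u)\cup N(v)\bigr)$. On the other hand, applying~\eqref{eq:k-vertices} to the $(r-2)$-element set~$W$ gives $|N(W)|\geq (r-2)\delta(G)-(r-3)|V|$.

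Combining the two estimates via inclusion-exclusion on $N(u)\cup N(v)$,
\[
	(r-2)\delta(G)-(r-3)|V|\leq |N(W)|\leq |V|-|N(u)|-|N(v)|+|N(u)\cap N(v)|,
\]
and plugging in $|N(u)|,|N(v)|\geq\delta(G)$ rearranges to the desired bound $|N(u)\cap N(v)|\geq r\delta(G)-(r-2)|V|$. I do not foresee a serious obstacle: the only real insight needed is that both $W\cup\{u\}$ and $W\cup\{v\}$ are $(r-1)$-cliques, which is exactly what forces $N(W)$ into the complement of $N(u)\cup N(v)$; a two-line double count then finishes the job.
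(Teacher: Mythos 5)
Your proof is correct and follows essentially the same route as the paper: obtain an $(r-2)$-clique $W$ in $N(u)\cap N(v)$ from maximality, note that $N(W)$ must avoid $N(u)\cup N(v)$ by $K_r$-freeness, and combine \eqref{eq:k-vertices} with inclusion-exclusion. Your version just spells out more explicitly why $N(W)$ is disjoint from $N(u)\cup N(v)$, which the paper asserts without comment.
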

\begin{proof}
	Since $G=(V,E)$ is maximal $K_r$-free and $uv\not\in E$, the joint neighbourhood $N(u)\cap N(v)$ induces a
	$K_{r-2}$. Applying~\eqref{eq:k-vertices} to the $r-2$  vertices $w_1,\dots,w_{r-2}$ that span $K_{r-2}$ 
	in the joint neighbourhood of $u$ and $v$ yields $N(\{w_1,\dots,w_{r-2}\})\geq (r-2)\delta(G)-(r-3)|V|$.
	Moreover, since  $N(\{w_1,\dots,w_{r-2}\})$ must be disjoint from $N(u)\cup N(v)$, we obtain
	\begin{align*}
	|V| &\geq(r-2)\delta(G)-(r-3)|V| + |N(u)\cup N(v)| \\
		&= (r-2)\delta(G)-(r-3)|V| + |N(u)| + |N(v)|- |N(u)\cap N(v)| 
	\end{align*}
	and the proposition follows.
\end{proof}

In the proof of the last proposition we used the observation, that the neighbourhood 
of any  two non-adjacent vertices in a maximal $K_r$-free graph induces a $K_{r-2}$.
Next we note that for maximal $K_r$-free graphs in $\cF(r,\eps)$, we can strengthen 
this observation and ensure that the clique $K_{r-2}$ is disjoint from an arbitrary given small 
set of vertices.

\begin{prop}\label{prop:K_r-2}
	For $r\geq 3$ and $\eps>0$,
	let $G=(V,E)$ be a maximal $K_r$-free graph from~$\cF(r,\eps)$.
	If two distinct vertices $u$, $v\in V$ are non-adjacent in $G$ and $U\subseteq V$ satisfies $|U|<\eps |V|$, 
	then $K_{r-2}\subseteq G[(N(u)\cap N(v))\setminus U]$.
\end{prop}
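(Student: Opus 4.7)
Write $S=N(u)\cap N(v)$; the goal is to find a $K_{r-2}$ inside $G[S\setminus U]$. The plan is to build such a clique greedily, choosing its vertices $w_1,\dots,w_{r-2}$ one at a time so that at step $k$ the vertex $w_k$ lies in $\bigl(S\cap N(\{w_1,\dots,w_{k-1}\})\bigr)\setminus U$. Each such $w_k$ then belongs to $S\setminus U$ and is adjacent to every previously chosen vertex, so $\{w_1,\dots,w_{r-2}\}$ spans the required $K_{r-2}$.

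For the greedy selection to succeed at step $k$ one needs $|S\cap N(\{w_1,\dots,w_{k-1}\})|>|U|$. To establish this I would bound the two factors of the intersection separately and combine them via $|A\cap B|\geq |A|+|B|-|V|$: Proposition~\ref{prop:2nonadj} applied to the non-edge $uv$ gives $|S|\geq r\delta(G)-(r-2)|V|$, while~\eqref{eq:k-vertices} applied to the clique $\{w_1,\dots,w_{k-1}\}$ yields $|N(\{w_1,\dots,w_{k-1}\})|\geq (k-1)\delta(G)-(k-2)|V|$. Adding these estimates and subtracting $|V|$ gives
\[
	\bigl|S\cap N(\{w_1,\dots,w_{k-1}\})\bigr|\geq (r+k-1)\delta(G)-(r+k-3)|V|\,.
\]
Substituting $\delta(G)\geq\bl\frac{2r-5}{2r-3}+\eps\br|V|$ and simplifying, the right-hand side equals $\tfrac{2(r-k-2)}{2r-3}|V|+(r+k-1)\eps|V|$, which for $1\leq k\leq r-2$ is at least $r\eps|V|>|U|$. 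Hence the required inequality holds at every step, and after $r-2$ iterations the procedure produces the desired $K_{r-2}$ in $G[S\setminus U]$.

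The main obstacle is the final step $k=r-2$. A direct application of~\eqref{eq:k-vertices} to the entire $(r-1)$-element set $\{u,v,w_1,\dots,w_{r-3}\}$ would only give $(r-1)\delta(G)-(r-2)|V|\geq -\tfrac{|V|}{2r-3}+(r-1)\eps|V|$, which can well be non-positive for small $\eps$. The decisive idea is therefore to split off the non-adjacent pair $\{u,v\}$ and invoke the sharper bound of Proposition~\ref{prop:2nonadj} (which crucially uses the maximality of $G$); this replaces the naive estimate $|S|\geq 2\delta(G)-|V|$ by the stronger $|S|\geq r\delta(G)-(r-2)|V|$ and provides precisely the extra slack needed to evade~$U$ on the last step.
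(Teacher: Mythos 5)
Your proposal is correct and follows essentially the same route as the paper: a greedy selection of the clique vertices, where the key estimate at each step combines the strengthened bound $|N(u)\cap N(v)|\geq r\delta(G)-(r-2)|V|$ from Proposition~\ref{prop:2nonadj} with~\eqref{eq:k-vertices} for the already chosen vertices, yielding (in the worst case $k=r-2$) exactly the paper's bound $(2r-3)\delta(G)-(2r-5)|V|\geq(2r-3)\eps|V|>|U|$. Your step-by-step computation for general $k$ just makes explicit what the paper handles by proving only the tightest case.
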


\begin{proof}
	Given $u$, $v$ and $U$ as stated, we first consider any set of $r-3$ vertices $w_1,\dots,w_{r-3}\in V$ and 
	owing to~\eqref{eq:k-vertices} we have 
	\[
		N(\{w_1,\dots,w_{r-3}\})\geq (r-3)\delta(G)-(r-4)|V|\,.
	\]
	Moreover, since $u$ and $v$ are non-adjacent Proposition~\ref{prop:2nonadj} tells us that 
	\[
		|N(u)\cap N(v)|\geq r\delta(G)-(r-2)|V|\,.
	\]
	Consequently, the joint neighbourhood of $u$, $v$ and $w_1,\dots,w_{r-3}$ satisfies 
	\begin{align*}
		|N(\{u,v,w_1,\dots,w_{r-3})| & \geq N(\{w_1,\dots,w_{r-3}\}) - (|V|-|N(u)\cap N(v)|)\\
			& \geq (2r-3)\delta(G) -(2r-5)|V|
	\end{align*}
	and the minimum degree condition from $G\in \cF(r,\eps)$ implies that 
	\[
		|N(\{u,v,w_1,\dots,w_{r-3})|\geq (2r-3)\eps |V|\geq 3\eps|V| >|U|\,.
	\]
	Summarising, we have shown that any collection of $r-3$ vertices together with $u$ and $v$
	have a joint neighbour outside of $U$. Selecting  
	$w_1$ from $(N(u)\cap N(v))\setminus U$ and inductively $w_{i+1}$
	from $N(\{u,v,w_1,\dots,w_i)\setminus U$
	for $i=1,\dots,r-3$ yields the desired clique on $w_1,\dots,w_{r-2}$.
\end{proof}

The last observation asserts that any sufficiently large subset of vertices 
induces a $K_{r-2}$ in a graph $G$ from  $\cF(r,\eps)$.

\begin{prop}\label{prop:K_{r-2}inY}
	For $r\geq 3$ and $\eps>0$ let $G=(V,E$) be a graph from $\cF(r,\eps)$. If $Z\subseteq V$ satisfies
	$|Z|\geq(\frac{2r-6}{2r-3}+\eps)|V|$,
	then  $K_{r-2}\subseteq G[Z]$.
\end{prop}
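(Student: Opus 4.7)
The plan is to construct the desired clique $K_{r-2}$ greedily, picking its vertices $w_1,\dots,w_{r-2}$ one at a time inside $Z$, using \eqref{eq:k-vertices} to show at each stage that the common neighbourhood of the already-chosen vertices still meets $Z$.

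More precisely, I would argue by induction on $k$ that for each $0 \leq k \leq r-3$ one can find $w_1,\dots,w_k \in Z$ spanning a $K_k$ with the additional property that the set $Z \cap N(\{w_1,\dots,w_k\})$ is nonempty (with the convention $N(\emptyset) = V$). For $k=0$ this is immediate because $|Z| \geq (\tfrac{2r-6}{2r-3} + \eps)|V| > 0$. For the inductive step, having chosen $w_1,\dots,w_k$, select any $w_{k+1} \in Z \cap N(\{w_1,\dots,w_k\})$; this vertex is automatically adjacent to all previous $w_i$'s, so $\{w_1,\dots,w_{k+1}\}$ spans a $K_{k+1}$ inside $Z$.

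To close the induction and reach $k = r-2$, I need to verify that $Z \cap N(\{w_1,\dots,w_k\}) \neq \emptyset$ for every $k \leq r-3$. By \eqref{eq:k-vertices} and the minimum degree assumption,
\[
	|N(\{w_1,\dots,w_k\})| \;\geq\; k\delta(G) - (k-1)|V| \;\geq\; \Bigl(\tfrac{2r-3-2k}{2r-3} + k\eps\Bigr)|V|,
\]
where I used $k\cdot\tfrac{2r-5}{2r-3} - (k-1) = \tfrac{2r-3-2k}{2r-3}$. Combining this with $|Z| \geq (\tfrac{2r-6}{2r-3} + \eps)|V|$ via the inclusion-exclusion bound $|Z \cap N| \geq |Z| + |N| - |V|$ gives
\[
	|Z \cap N(\{w_1,\dots,w_k\})| \;\geq\; \Bigl(\tfrac{2r-6-2k}{2r-3} + (k+1)\eps\Bigr)|V|,
\]
which is strictly positive for every $0 \leq k \leq r-3$ (the leading fraction being nonnegative in that range, and the $\eps$-term strictly positive). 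In particular the induction reaches $k = r-2$, producing the required $K_{r-2}$ in $G[Z]$.

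I do not expect any real obstacle: the only content is the arithmetic check that the coefficient $\tfrac{2r-6-2k}{2r-3}$ stays nonnegative up to $k = r-3$ — which is exactly the range needed to complete the clique — so the minimum degree threshold $\tfrac{2r-5}{2r-3}$ and the density threshold $\tfrac{2r-6}{2r-3}$ match up perfectly. Note that the $K_r$-freeness of $G$ and the maximal-$K_r$-free machinery of the previous propositions are not used here; only the degree hypothesis from $\cF(r,\eps)$ plays a role.
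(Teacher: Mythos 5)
Your proof is correct and is essentially the paper's own argument: both greedily build the clique inside $Z$ using \eqref{eq:k-vertices} together with the bound $|Z\cap N|\geq |Z|+|N|-|V|$, and your per-step estimate at $k=r-3$ reduces exactly to the paper's bound $(r-2)\eps|V|>0$. The only cosmetic difference is that you track the intermediate sizes for each $k$ explicitly, whereas the paper only verifies the worst case $k=r-3$.
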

\begin{proof}
	Similarly as in the proof of Proposition~\ref{prop:K_r-2} we consider an arbitrary set of 
	$(r-3)$ vertices $w_1,\dots,w_{r-3}\in V$ and 
	from~\eqref{eq:k-vertices} we infer 
	\begin{align*}
		|N(\{w_1,\dots,w_{r-3}\})\cap Z|
		&\geq 
		(r-3)\delta(G)-(r-4)|V|-(|V|-|Z|)\\
		&\geq
		\Big((r-3)\frac{2r-5}{2r-3}+\frac{2r-6}{2r-3}-(r-3)\Big)|V|+(r-2)\eps |V|\\
		&=(r-2)\eps |V|>0\,.
	\end{align*}
	Consequently, any set of $k-3$ vertices has a joint neighbour in~$Z$. Hence, selecting $w_1$ in $Z$
	and inductively $w_{i+1}$ from $N(\{w_1,\dots,w_i\})\cap Z$ for $i=1,\dots,r-3$ yields 
	the desired clique on $w_1,\dots,w_{r-2}$.
\end{proof}

																															\section{Proof of the main result}																																										
	In the proof of  Theorem~\ref{theorem:main} we partition the vertex set of a maximal $K_r$-free graph 
	$G\in\cF(r,\eps)$ into a bounded number of 
	stable sets and show that any two such independent sets are spanning complete or empty
	bipartite graphs. Consequently, $G$ is a blow-up of a $K_r$-free graph of bounded size, which is equivalent 
	to the property that $G$ has a $K_r$-free homomorphic image of bounded size.
	
	We obtain the independent sets in two steps:
	Roughly speaking, in the first step we consider a random subset $X \subset V(G)$ of bounded size 
	and partition the vertices of $V(G)$ according to their neighbourhood in $X$.
	However, since $X$ has only bounded size, a small (but linear sized) set of vertices may 
	have no or only a few neighbours in $X$ and we deal with those vertices in the second step, by 
	considering the neighbourhood into the independent sets from the first step. 

\begin{proof}[Proof of Theorem~\ref{theorem:main}]
	Let $r\geq 3$. Owing to~\eqref{eq:dchir} we have $\delta_{\chi}(K_r)=\frac{2r-5}{2r-3}$
	and since  by definition $\delta_{\chi}(K_r)\leq \delta_{\hom}(K_r)$, we have to prove the matching
	upper bound on $\delta_{\hom}(K_r)$. Let $\eps>0$ and set 
	\begin{equation}\label{eq:const}
		m=\lceil 4\ln(8/\eps)/\eps^2\rceil+1\,,\quad
		T=2^m\,,
		\qand 
		L=2^T+T\,.
	\end{equation}
	We will show that for any $n>L$ and for every maximal $K_r$-free graph $G=(V,E)$ from~$\cF(r,\eps)$
	there exists some $H\in\forb_L(K_r)$ such that $G\homa H$, which clearly suffices to prove the theorem.
	
		In the first part we consider a random subset $X\subseteq V$ of size $m$ chosen uniformly at random
	from all $m$-element subsets of~$V$ and we consider the random set 
	\[
		U_X
		=
		\Big\{v\in V\colond |N(v)\cap X|<\big(\tfrac{2r-5}{2r-3}+\tfrac{\eps}{2}\big)m\Big\}
	\]
	of vertices with ``small'' degree in~$X$.
	We show that with positive probability $|U_X|\leq \eps n/4$ and $|X\cap U_X|\leq \eps m/4$.
	
	It follows from Chernoff's inequality for the hypergeometric 
	distribution (see, e.g.,~\cite{random-graphs}*{Theorem~2.10, eq.~(2.6)} that for a given vertex $v\in V$ we 
	have 
	\begin{equation}\label{eq:Xv}
		\PP(v\in U_X)
		\leq
		\exp(-\eps^2m/4)\,.
	\end{equation}
	Consequently, 
	\[
		\EE[|U_X|]
		\leq 
		\exp(-\eps^2m/4)\cdot n
		\overset{\eqref{eq:const}}{<} \eps n/8
	\]
	and by Markov's inequality we have 
	\begin{equation}\label{eq:X1}
		\PP(|U_X|\leq \eps n/4)> 1/2\,.
	\end{equation}
	In other words, with probability more than $1/2$ all but at most $\eps n/4$
	vertices inherit approximately the minimum degree condition on the randomly chosen 
	set $X$. 
	
	Next we show that with probability at least $1/2$ the intersection 
	of $X$ with $U_X$ is small. This follows from a standard double counting argument.
	In fact, the same argument giving~\eqref{eq:Xv} shows that for every 
	$v\in V$ there are at most $\exp(-\eps^2(m-1)/4)\binom{n-1}{m-1}$
	different $(m-1)$-element subsets~$Y$ of~$V$ for which  
	\begin{equation}\label{eq:badY}
		|N(v)\cap Y|\leq\big(\tfrac{2r-5}{2r-3}+\tfrac{\eps}{2}\big)\cdot (m-1)\,.
	\end{equation}
	Hence, there are at most $n\exp(-\eps^2(m-1)/4)\binom{n-1}{m-1}$ pairs $(v,Y)$ 
	such that~\eqref{eq:badY}
	holds.
	Therefore, there are at most 
	\[
		\frac{n\cdot \exp(-\eps^2(m-1)/4)\binom{n-1}{m-1}}{\eps m/4}\overset{\eqref{eq:const}}{\leq}\frac{1}{2}\binom{n}{m}
	\] 
	$m$-element subsets $X\subseteq V$ that contain at least 
	$\eps m/4$ vertices $v$ such that $v$ and 
	\[
		Y=X\setminus\{v\}
	\]
	satisfy~\eqref{eq:badY}. Combining this with~\eqref{eq:X1} shows that there 
	exists an $m$-element set $X\subseteq V$ with the promised properties
	\[
		|U_X|\leq\frac{\eps}{4}n
		\qand 
		|X\cap U_X|< \frac{\eps}{4}m\,.
	\]
	
	Finally, we set 
	\[
		Y=X\setminus U_X \qand U_Y=\Big\{v\in V\colond |N(v)\cap Y|<\big(\tfrac{2r-5}{2r-3}+\tfrac{\eps}{4}\big)|Y|\Big\}
	\]
	and we note that the induced subgraph on $Y$  satisfies
	\[
		G[Y]\in\cF(r,\eps/4)
	\]
	and since $U_Y\subseteq U_X$ we also have 
	\[
		|U_Y|\leq |U_X|\leq \eps n/4\,.
	\]

	Next we define a vertex partition of $V\setminus U_Y$ given by the neighbourhoods in $Y$.
	For that we say two vertices $v$, $w\in V\setminus U_Y$ are equivalent w.r.t.~$Y$,
	if they have the same neighbours in $Y$, i.e., $N(v)\cap Y=N(w)\cap Y$. Let 
	$V_1\dcup\dots\dcup V_t=V\setminus U_Y$ be the corresponding partition 
	given by the equivalence classes and let $Y_i$ be the neighbourhood 
	of the vertices from $V_i$ in $Y$, i.e., for any $v_i\in V_i$ we have 
	\[
		N(v_i)\cap Y=Y_i\,.
	\]
	Clearly, $t\leq 2^{|Y|}\leq 2^{|X|}=2^m= T$.

	We observe that the vertex classes $V_1,\dots,V_t$ are independent sets 
	in $G$, i.e., for every $i=1,\dots,t$ we have
	\begin{equation}\label{eq:eVi} 
		E_G(V_i)=\emptyset\,.
	\end{equation}
	In fact, since every vertex $v\in V\setminus U_Y$ has at least 
	$(\frac{2r-3}{2r-5}+\eps/4)|Y|$  neighbours in $Y$ and since $G[Y]\in\cF(r,\eps/4)$ it follows from
	Proposition~\ref{prop:K_{r-2}inY} applied to $G[Y]$ and $Z=Y_i$ 
	that~$Y_i$ induces a $K_{r-2}$.
	Consequently, the $K_r$-freeness of $G$ 
	implies that no two vertices  $v_i$, $w_i\in V_i$
	can be adjacent in $G$ and~\eqref{eq:eVi} follows.
	
	Next we observe that the induced bipartite graphs given by the partition of equivalence classes 
	contain no or all edges, i.e.,
	for every $1\leq i<j\leq t$ we have 
	\begin{equation}\label{eq:eViVj} 
		e_G(V_i,V_j)=0\quad \text{or}\quad e_G(V_i,V_j)=|V_i||V_j|\,.
	\end{equation}
	Suppose for a contradiction that there are (not necessarily distinct) vertices $v_i$, $w_i\in V_i$ and 
	$v_j$, $w_j\in V_j$ such that $v_iv_j\in E(V_i,V_j)$ and $w_iw_j\not\in	E(V_i,V_j)$.
	Due to the edge $v_iv_j$ the intersection $Y_i\cap Y_j$ must be $K_{r-2}$-free and, hence, 
	in view of Proposition~\ref{prop:K_{r-2}inY} applied to~$G[Y]$ and $Z=Y_i\cap Y_j$
	we have 
	\[
		|Y_i\cap Y_j|<\Big(\frac{2r-6}{2r-3}+\frac{\eps}{4}\Big)|Y|
	\]
	and, therefore,
	\begin{equation}\label{eq:YiYj}
		|Y_i\cup Y_j|
		= 
		|Y_i|+|Y_j|-|Y_i\cap Y_j| 
		>
		\Big(2\frac{2r-5}{2r-3}-\frac{2r-6}{2r-3}+\frac{\eps}{4}\Big)|Y|
		=
		\Big(\frac{2r-4}{2r-3}+\frac{\eps}{4}\Big)|Y|\,.
	\end{equation}
	
	Next we use that $w_i\in V_i$ and $w_j\in V_j$ are non-adjacent.
	Owing to the maximality of $G$ we can apply Proposition~\ref{prop:K_r-2}
	to $G$ and $U_Y$ and obtain a clique $K_{r-2}$ outside $U_Y$ in the joint 
	neighbourhood of $w_i$ and $w_j$. Let $R$ be the vertex set of this $K_{r-2}$.
	Since $R\subseteq V\setminus U_Y$ and since the sets $V_k$ are independent for every 
	$k=1,\dots,t$ the set $R$ intersects $r-2$ classes $V_{k_1},\dots, V_{k_{r-2}}$
	different from $V_i$ and $V_j$. We consider the joint neighbourhood of $R$ in $Y$ 
	\[
		N(R)\cap Y = Y_{k_1}\cap\dots\cap Y_{k_{r-2}}
	\]
	and note that 
	\[
		|N(R)\cap Y|\geq (r-2)\Big(\frac{2r-5}{2r-3}+\frac{\eps}{4}\Big)|Y|-(r-3)|Y|=\Big(\frac{1}{2r-3}+\frac{\eps}{4}\Big)|Y|\,.
	\]
	However, combined with~\eqref{eq:YiYj} this implies that either $Y_i\cap N(R)\neq \emptyset$ or $Y_j\cap N(R)\neq \emptyset$.
	In either case this gives rise to a $K_r$ in $G$, which yields the desired contradiction and~\eqref{eq:eViVj} follows.
	
	Note that~\eqref{eq:eVi} shows that $G[V\setminus U_Y]$ is 
	homomorphic to a graph $H'$ on $t\leq T$ and it follows from~\eqref{eq:eViVj} 
	that $G[V\setminus U_Y]$ is a blow-up of $H'$. So in particular $H'$ is $K_r$-free.
	
	It remains to deal with the vertices in $U_Y$. For that we first observe that for every vertex 
	$u\in U_Y$	and $i=1,\dots,t$ we have 
	\begin{equation}\label{eq:uVi} 
		N(u)\cap V_i=\emptyset\quad \text{or}\quad N(u)\cap V_i=V_i\,.
	\end{equation}
	In fact, suppose for a contradiction, that for some $v_i$, $w_i\in V_i$ we have 
	$uv_i\in E$ while~$u$ and $w_i$ are not adjacent. Again the maximality of $G$
	and Proposition~\ref{prop:K_r-2} shows that $N(u)\cap N(w_i)$ contains a $K_{r-2}$
	avoiding $U_Y$. However, since by~\eqref{eq:eVi} and~\eqref{eq:eViVj} the vertices $v_i$ and $w_i$ 
	have the same neighbourhood in $V\setminus U_Y$ the same $K_{r-2}$ is also in the 
	neighbourhood of $v_i$, which together with $v_i$ and $u$ yields a $K_r$ in $G$.
	This contradicts $K_r\not\subseteq G$ and~\eqref{eq:uVi} follows.
	
	Next we partition $U_Y$ according to the neighbourhoods of its vertices in $V\setminus U_Y$.
	For every $S\subseteq [t]=\{1,\dots,t\}$ we set
	\[
		V_S=\bigg\{u\in U_Y\colond N(u)\setminus U_Y=\bigcup_{s\in S}V_s\bigg\}\,,
	\]
	which yields a partition of $U_Y$ into at most $2^t\leq 2^T$
	classes. Similar as in~\eqref{eq:eViVj} and~\eqref{eq:uVi} we next observe that for any $S$, 
	$S'\subseteq [t]$  with $S\neq S'$ we have 
	\begin{equation}\label{eq:eVSVS'}
		e_G(V_S,V_{S'})=0\quad \text{or}\quad e_G(V_S,V_{S'})=|V_S||V_{S'}|\,.
	\end{equation}
	The proof is very similar to the proof of~\eqref{eq:uVi}.
	Suppose for a contradiction without loss of generalisation there exist 
	vertices $v_S$, $w_S\in V_S$ and $u\in V_{S'}$ such that
	$uv_S\in E$ while $u$ and $w_S$ are not adjacent. Then by the maximality of $G$
	Proposition~\ref{prop:K_r-2} yields a  $K_{r-2}$ in $N(u)\cap N(w_S)$
	avoiding $U_Y$. Owing to~\eqref{eq:uVi} the vertices $v_S$ and $w_S$ 
	have the same neighbourhood in $V\setminus U_Y$ and, hence, the same $K_{r-2}$ 
	is also in the neighbourhood of $v_S$, which together with $v_S$ and $u$ yields a $K_r$ in $G$.
	This contradicts $K_r\not\subseteq G$ and~\eqref{eq:eVSVS'} follows.
	
	The last thing we have to show is that $V_S$ is independent in $G$, i.e., for every $S\subseteq [t]$ we
	have 
	\begin{equation}\label{eq:eVS}
		E_G(V_S)=\emptyset\,.
	\end{equation}
	This is a direct consequence of~\eqref{eq:uVi} and Proposition~\ref{prop:K_{r-2}inY}.
	In fact, it follows from~\eqref{eq:uVi} that any two vertices $u$, $v\in V_S$ 
	have the same neighbourhood in $V\setminus U_Y$. Hence, their joint neighbourhood 
	has size at least $(\frac{2r-5}{2r-3}+\frac{3\eps}{4})n$ and Proposition~\ref{prop:K_{r-2}inY}
	yields a $K_{r-2}$ in the joint neighbourhood of $u$ and $v$. Therefore, $u$ and $v$ cannot be
	adjacent in~$G$ and~\eqref{eq:eVS} follows.
	
	Summarising, we have shown that there exists a vertex partition 
	\[
		\bigcup_{i=1}^tV_i\cup\bigcup_{S\subseteq [t]} V_S=V
	\]
	of $V$ into independent sets (see~\eqref{eq:eVi} and~\eqref{eq:eVS})
	such that all naturally induced bipartite graphs are either complete 
	or empty (see~\eqref{eq:eViVj},~\eqref{eq:uVi}, and~\eqref{eq:eVSVS'}). Hence, there exists 
	a graph $H$ on $2^T+T\leq L$ vertices such that $G$ is a blow-up of $H$ and, therefore,
	$G\homa H$ and $H$ itself must be $K_r$-free.
	This concludes the proof of Theorem~\ref{theorem:main}.
\end{proof}

We remark that the size $H$ is doubly exponential in $\textrm{poly}(1/\eps)$, i.e., there exists some 
universal constant $c$ such that
\[
	|V(H)|\leq L=2^{2^{c\ln(\eps)/\eps^2}}
\]
holds.

We close by noting that the same approach used in the proof of 
Theorem~\ref{theorem:main} can be used to show Thomassen's 
result from~\cite{T_C07} that the chromatic threshold of odd cycles 
of at least five is $0$. However, it remains open, if this approach 
can also be used to address Questions~\ref{q:1} and~\ref{q:2}.

\begin{bibdiv}
\begin{biblist}

\bib{ABK}{article}{
   author={Allen, Peter},
   author={B{\"o}ttcher, Julia},
   author={Griffiths, Simon},
   author={Kohayakawa, Yoshiharu},
   author={Morris, Robert},
   title={The chromatic thresholds of graphs},
   journal={Adv. Math.},
   volume={235},
   date={2013},
   pages={261--295},
   issn={0001-8708},
   review={\MR{3010059}},
   doi={10.1016/j.aim.2012.11.016},
}

\bib{A62}{article}{
   author={Andr{\'a}sfai, B.},
   title={\"Uber ein Extremalproblem der Graphentheorie},
   language={German},
   journal={Acta Math. Acad. Sci. Hungar.},
   volume={13},
   date={1962},
   pages={443--455},
   issn={0001-5954},
   review={\MR{0145503 (26 \#3034)}},
}

\bib{BT}{article}{
	author={Brandt, St.},
	author={Thomass\'e, St.},
	title={Dense triangle-free graphs are four-colorable:
     		A solution to the {E}rd{\H{o}}s-{S}imonovits problem},
	note={to appear},
}

\bib{RD}{book}{
   author={Diestel, Reinhard},
   title={Graph theory},
   series={Graduate Texts in Mathematics},
   volume={173},
   edition={4},
   publisher={Springer, Heidelberg},
   date={2010},
   pages={xviii+437},
   isbn={978-3-642-14278-9},
   review={\MR{2744811 (2011m:05002)}},
   doi={10.1007/978-3-642-14279-6},
}

\bib{ES73}{article}{
   author={Erd{\H{o}}s, P.},
   author={Simonovits, M.},
   title={On a valence problem in extremal graph theory},
   journal={Discrete Math.},
   volume={5},
   date={1973},
   pages={323--334},
   issn={0012-365X},
   review={\MR{0342429 (49 \#7175)}},
}

\bib{ErSt46}{article}{
   author={Erd{\"o}s, P.},
   author={Stone, A. H.},
   title={On the structure of linear graphs},
   journal={Bull. Amer. Math. Soc.},
   volume={52},
   date={1946},
   pages={1087--1091},
   issn={0002-9904},
   review={\MR{0018807 (8,333b)}},
}

\bib{GL10}{article}{
   author={Goddard, Wayne},
   author={Lyle, Jeremy},
   title={Dense graphs with small clique number},
   journal={J. Graph Theory},
   volume={66},
   date={2011},
   number={4},
   pages={319--331},
   issn={0364-9024},
   review={\MR{2791450 (2012f:05148)}},
   doi={10.1002/jgt.20505},
}

\bib{H82}{article}{
   author={H{\"a}ggkvist, Roland},
   title={Odd cycles of specified length in nonbipartite graphs},
   conference={
      title={Graph theory},
      address={Cambridge},
      date={1981},
   },
   book={
      series={North-Holland Math. Stud.},
      volume={62},
      publisher={North-Holland, Amsterdam-New York},
   },
   date={1982},
   pages={89--99},
   review={\MR{671908 (84h:05079)}},
}

\bib{random-graphs}{book}{
   author={Janson, Svante},
   author={{\L}uczak, Tomasz},
   author={Rucinski, Andrzej},
   title={Random graphs},
   series={Wiley-Interscience Series in Discrete Mathematics and
   Optimization},
   publisher={Wiley-Interscience, New York},
   date={2000},
   pages={xii+333},
   isbn={0-471-17541-2},
   review={\MR{1782847 (2001k:05180)}},
   doi={10.1002/9781118032718},
}

\bib{L06}{article}{
   author={{\L}uczak, Tomasz},
   title={On the structure of triangle-free graphs of large minimum degree},
   journal={Combinatorica},
   volume={26},
   date={2006},
   number={4},
   pages={489--493},
   issn={0209-9683},
   review={\MR{2260851 (2007e:05077)}},
   doi={10.1007/s00493-006-0028-8},
}

\bib{LT10}{article}{
	author={{\L}uczak, Tomasz},
	author={Thomass\'e, St.},
	title={Coloring dense graphs via {VC}-dimension},
	note={submitted},
}

\bib{N10}{article}{
    author={Nikiforov, V.},
    title={Chromatic number and minimum degree of ${K}_r$-free graphs},
  	note={preprint},
}

\bib{Sz}{article}{
   author={Szemer{\'e}di, Endre},
   title={Regular partitions of graphs},
   language={English, with French summary},
   conference={
      title={Probl\`emes combinatoires et th\'eorie des graphes},
      address={Colloq. Internat. CNRS, Univ. Orsay, Orsay},
      date={1976},
   },
   book={
      series={Colloq. Internat. CNRS},
      volume={260},
      publisher={CNRS, Paris},
   },
   date={1978},
   pages={399--401},
   review={\MR{540024 (81i:05095)}},
}

\bib{T_C02}{article}{
   author={Thomassen, Carsten},
   title={On the chromatic number of triangle-free graphs of large minimum
   degree},
   journal={Combinatorica},
   volume={22},
   date={2002},
   number={4},
   pages={591--596},
   issn={0209-9683},
   review={\MR{1956996 (2004a:05058)}},
   doi={10.1007/s00493-002-0009-5},
}

\bib{T_C07}{article}{
   author={Thomassen, Carsten},
   title={On the chromatic number of pentagon-free graphs of large minimum
   degree},
   journal={Combinatorica},
   volume={27},
   date={2007},
   number={2},
   pages={241--243},
   issn={0209-9683},
   review={\MR{2321926 (2008a:05101)}},
   doi={10.1007/s00493-007-0054-1},
}

\bib{Tu41}{article}{
   author={Tur{\'a}n, Paul},
   title={Eine Extremalaufgabe aus der Graphentheorie},
   language={Hungarian, with German summary},
   journal={Mat. Fiz. Lapok},
   volume={48},
   date={1941},
   pages={436--452},
   review={\MR{0018405 (8,284j)}},
}

\end{biblist}
\end{bibdiv}

\end{document}